\newtheorem{theorem}{Theorem}[section]
\newtheorem{remark}{Remark}[section]
\newtheorem{corollary}{Corollary}[section]
\numberwithin{equation}{section}
\begin{document}
	
\title{Mathematical inequalities on some weighted means}
\author{Shigeru Furuichi, Kenjiro Yanagi and Hamid Reza Moradi}
\subjclass[2010]{Primary 26E60, Secondary 26D07, 26D15, 26D99.}
\keywords{Arithmetic-geometric mean inequality, weighted logarithmic mean, convexity, Jensen inequality, Hermite-Hadamard inequality}

\begin{abstract}
Some mathematical inequalities among various weighted means are studied. Inequalities on weighted logarithmic mean are given. Besides, the gap in Jensen's inequality is studied as a convex function approach. Consequently, some non-trivial inequalities on means are given. Some operator inequalities are also shown.
\end{abstract}
\maketitle
\pagestyle{myheadings}
\markboth{\centerline {}}
{\centerline {}}
\bigskip
\bigskip
\section{Introduction}
The weighted arithmetic-geometric mean inequality (Young inequality) states that
$ (1-v)a+vb\geq a^{1-v}b^v$ for $0\le v\le 1$ and $a,b>0,$
which is equivalent to $(1-v)+v t \geq t^v$ for $0\le v\le 1$ and $t>0$.
$(1-v)+v t$ and  $t^v$ are often called representing functions of the weighted arithmetic mean $a\nabla_vb:=(1-v)a+vb$ and the weighted geometric mean $a\sharp_vb:=a^{1-v}b^{v}$, respectively. We easily find the following relation  for $0\le v\le 1$ and $t>0$:
\begin{equation}\label{sec1_ineq01}
 \left\{(1-v)+vt^{-1}\right\}^{-1} \leq t^v \leq (1-v)+v t,
\end{equation}
where $\left\{(1-v)+vt^{-1}\right\}^{-1}$ is a representing function of  the weighted harmonic mean $a!_vb:=\left\{(1-v)a^{-1}+vb^{-1}\right\}^{-1}$ for  $0\le v\le 1$ and $a,b>0$.
In this paper, we use the representing functions for the corresponding means, to simplify the results.
Refinements and reverses for the above inequalities have been studied in many literature. We  refer the readers to \cite[Section 2]{FM2020}. 

Here, we summarize some reverse inequalities \eqref{sec1_ineq01} with some ratios.

\begin{itemize}
\item[(i)] We have the following relations
\begin{equation}\label{sec1_ineq02}
\max_{0\le v \le 1}\frac{(1-v)+vt}{t^v}=S(t),\quad \max_{0\le v \le 1}\frac{t^v}{\left\{(1-v)+vt^{-1}\right\}^{-1}}=S(t),
\end{equation}
where $S(t):=\dfrac{ t^{1/t-1}}{e\log t^{1/t-1}}$ is called Specht ratio \cite[Section 2]{FM2020}. 
\item[(ii)] We also have the following relation
\begin{equation}\label{sec1_ineq03}
\max_{0\le v\le 1}\frac{(1-v)+vt}{\left\{(1-v)+vt^{-1}\right\}^{-1}}=K(t),
\end{equation}
where $K(t):=\dfrac{(t+1)^2}{4t}$ is called Kantorovich constant \cite[Section 2]{FM2020}. 
\end{itemize}
From the relations \eqref{sec1_ineq02} and \eqref{sec1_ineq03}, we have some reverse inequalities for  \eqref{sec1_ineq01} as
$$
(1-v)+vt\le S(t)t^v,\quad t^v\le S(t)\left\{(1-v)+vt^{-1}\right\}^{-1},\quad (1-v)+vt\le K(t) \left\{(1-v)+vt^{-1}\right\}^{-1}
$$
 for  $0\le v\le 1$ and $t>0$.

In the paper \cite{PSMA2016}, the following relations were obtained:
\begin{equation}\label{sec2_ineq01}
t^v \le f_v(t) \le \frac{1}{2}\left\{t^v+(1-v)+vt\right\}\leq (1-v)+vt
\end{equation}
where for $0< v < 1$ and $t>0$ with $t\neq 1$
\begin{equation}\label{sec2_ineq01a}
f_v(t):= \dfrac{1}{\log t}\left\{\dfrac{1-v}{v}\left(t^v-1\right)+\dfrac{v}{1-v}\left(t-t^v\right)\right\}=L_{1/2}(t^v,1)\nabla_{\frac{v}{1-v}}L_{1/2}(t,1)
\end{equation}
 is the representing function of the weighted logarithmic mean 
$$L_v(a,b):=\dfrac{1}{\log a-\log b}\left\{\dfrac{1-v}{v}(a-a^{1-v}b^v)+\dfrac{v}{1-v}(a^{1-v}b^v-b)\right\},\quad (a,b>0,a\neq b)$$
with $L_v(a,a):=a$.
So, we easily find $L_v(1,t)=f_v(t)=t\cdot L_v(1/t,1)$ and the relation for four weighted means as
$$
a!_v b \le a\sharp_v b  \le L_v(a,b) \le a\nabla_v b.
$$
In addition, the further tight lower bound of $L_v(1,t) $ was given in \cite{FN2020}:
$$
t^{v}\leq t^{v/2}\nabla_v t^{(1+v)/2} \leq L_v(t,1). 
$$
Note that $L_{1/2}(a,b)=\dfrac{a-b}{\log a-\log b}$ for $a\neq b$ and so $L_{1/2}(1,t)=L_{1/2}(t,1)= f_{1/2}(t)=\dfrac{t-1}{\log t}$ for $t\neq 1$.

As generalized results from convex analysis, some extended results for the logarithmic mean have been obtained in \cite{RF1,RF2}.

In this paper, we give reverse inequalities for \eqref{sec2_ineq01}. Certain new inequalities for convex functions are also established. Using these, we derive refinements of arithmetic-logarithmic mean and Hermite-Hadamard inequalities.
 
\section{Weighted logarithmic mean}
We start this section with the following theorem.
\begin{theorem}\label{thm2.1}
For $0< v < 1$ and $t>0$ with $t\neq 1$, we have
\begin{itemize}
\item[(i)] $f_v(t) \le \left(\max\left\{ 1,1/t\right\}\cdot L_{1/2}(t,1)\right) t^v$.
\item[(ii)] $\dfrac{1}{2}\left\{t^v+(1-v)+vt \right\}\le \left(\dfrac{S(t)+1}{2}\right) f_v(t)$.
\end{itemize}
\end{theorem}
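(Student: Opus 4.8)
The plan is to dispatch part (ii) first, as a short consequence of facts already recorded in Section~1, and then concentrate on part (i). For (ii): the leftmost inequality in \eqref{sec2_ineq01} gives $t^v\le f_v(t)$, and the first identity in \eqref{sec1_ineq02} gives $(1-v)+vt\le S(t)\,t^v$; since $t^v\le f_v(t)$ also yields $S(t)\,t^v\le S(t)f_v(t)$, adding the two bounds produces $t^v+(1-v)+vt\le \bigl(1+S(t)\bigr)f_v(t)$, and dividing by $2$ is exactly the claim. The only point to check is the boundary $t=1$, where every quantity equals $1$.

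For (i), the first step is the algebraic identity
\[
f_v(t)=(1-v)\,L_{1/2}(t^v,1)+v\,t^v\,L_{1/2}(t^{1-v},1)\qquad(0<v<1,\ t>0),
\]
obtained directly from \eqref{sec2_ineq01a}: writing $\log t=v^{-1}\log t^v=(1-v)^{-1}\log t^{1-v}$ and using $L_{1/2}(x,1)=(x-1)/\log x$, the first summand in \eqref{sec2_ineq01a} becomes $(1-v)L_{1/2}(t^v,1)$ and the second becomes $v\,t^v L_{1/2}(t^{1-v},1)$; the cases $v\in\{0,1\}$ follow by continuity. The two facts about $L_{1/2}$ I would exploit are the monotonicity of $x\mapsto L_{1/2}(x,1)=(x-1)/\log x$ on $(0,\infty)$ (equivalently the elementary inequality $x\log x\ge x-1$) and homogeneity, $L_{1/2}(\lambda x,\lambda)=\lambda L_{1/2}(x,1)$, hence $L_{1/2}(x,1)=x\,L_{1/2}(1/x,1)$.

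With the identity in hand I would split on $t$. If $t\ge1$ then $1\le t^v\le t$ and $1\le t^{1-v}\le t$, so monotonicity gives $L_{1/2}(t^v,1)\le L_{1/2}(t,1)$ and $L_{1/2}(t^{1-v},1)\le L_{1/2}(t,1)$, whence $f_v(t)\le\bigl((1-v)+v\,t^v\bigr)L_{1/2}(t,1)\le t^v L_{1/2}(t,1)$, the last step because $(1-v)(t^v-1)\ge0$; since $\max\{1,1/t\}=1$ for $t\ge1$, this is (i). For $0<t<1$ I would reduce to the previous case via the functional equation $f_v(t)=t\,f_{1-v}(1/t)$, which comes from \eqref{sec2_ineq01a} under $t\mapsto 1/t$, $v\mapsto 1-v$: with $s=1/t>1$ the already-proved case (weight $1-v$) gives $f_{1-v}(s)\le L_{1/2}(s,1)\,s^{1-v}$, so $f_v(t)=s^{-1}f_{1-v}(s)\le s^{-v}L_{1/2}(s,1)$, and homogeneity rewrites the right side as $t^v\cdot t^{-1}L_{1/2}(t,1)$, which is the right-hand side of (i) since $\max\{1,1/t\}=1/t$ here.

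The only substantive work is in (i), and the one genuine nuisance is the factor $\max\{1,1/t\}$, which is what forces the case split (equivalently, careful bookkeeping of where $t^v$ and $t^{1-v}$ sit relative to $t$); given the decomposition identity, the inequalities themselves are nothing more than monotonicity of the logarithmic mean. If one prefers to avoid splitting $t$ at the outer level, one can instead establish the two pointwise estimates $L_{1/2}(t^r,1)\le \max\{1,1/t\}\,L_{1/2}(t,1)$ and $L_{1/2}(t^r,1)\le t^r\max\{1,1/t\}\,L_{1/2}(t,1)$ for all $r\in[0,1]$ (again just monotonicity plus homogeneity, the case analysis now hidden inside these two estimates) and then insert them into the identity with $r=1-v$ and $r=v$ respectively.
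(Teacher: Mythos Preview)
Your proof is correct. For (ii) your argument is essentially the paper's: both combine $t^v\le f_v(t)$ from \eqref{sec2_ineq01} with $(1-v)+vt\le S(t)\,t^v$ from \eqref{sec1_ineq02}, the paper phrasing it as a ratio bound and you as a sum.

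For (i) the two arguments diverge. The paper rewrites
\[
\frac{f_v(t)}{t^v}=\frac{1}{\log t}\Bigl\{v\,\frac{t^{1-v}-1}{1-v}+(1-v)\,\frac{t^{-v}-1}{-v}\Bigr\}
\]
and then bounds each bracketed quotient separately by $\log t$, $t-1$, or $1-1/t$ using four elementary inequalities of the type $x-1\ge\log x$ and $x^r-1\le r(x-1)$, splitting on the sign of $\log t$. You instead recast the same expression as the convex combination $f_v(t)=(1-v)L_{1/2}(t^v,1)+v\,t^vL_{1/2}(t^{1-v},1)$ and reduce everything to monotonicity of $x\mapsto L_{1/2}(x,1)$, handling $0<t<1$ via the symmetry $f_v(t)=t\,f_{1-v}(1/t)$ rather than by a second round of estimates. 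The paper's route is more bare-handed and needs nothing beyond $e^x\ge 1+x$; yours is structurally cleaner, makes the role of the logarithmic mean visible, and replaces four ad hoc auxiliary inequalities by a single monotonicity statement plus a functional equation.
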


\begin{proof}
(i)~We have the following 
\begin{eqnarray*}
\frac{f_v(t)}{t^v} & = & \frac{1}{t^v \log t}\frac{(1-v)^2(t^v-1)+v^2(t-t^v)}{v(1-v)} \\
& = & \frac{1}{\log t} \frac{(1-2v)t^v+v^2t-(1-v)^2}{v(1-v)t^v} \\
& = & \frac{1}{\log t} \left\{ \frac{1-2v}{v(1-v)}+\frac{v}{1-v}t^{1-v}-\frac{1-v}{v}t^{-v} \right\} \\
& = & \frac{1}{\log t} \left\{ \left(\frac{1-v}{v}-\frac{v}{1-v} \right)+\frac{v}{1-v}t^{1-v}-\frac{1-v}{v}t^{-v} \right\} \\
& = & \frac{1}{\log t} \left\{ \frac{v}{1-v}(t^{1-v}-1)-\frac{1-v}{v}(t^{-v}-1) \right\} \\
& = & \frac{1}{\log t} \left\{ v \frac{t^{1-v}-1}{1-v}+(1-v)\frac{t^{-v}-1}{-v} \right\}. 
\end{eqnarray*}
Since $t^{1-v}-1 \geq \log t^{1-v} = (1-v) \log t$, 
\begin{equation}\label{sec2_ineq02}
\frac{t^{1-v}-1}{1-v} \geq \log t.
\end{equation}
Since $t^{-v}-1 \geq \log t^{-v} = -v \log t$, 
\begin{equation}\label{sec2_ineq03}
\frac{t^{-v}-1}{-v} \leq \log t.
\end{equation}
Since $t^{1-v}-1 \leq (1-v)t+v-1 = (1-v)(t-1)$, 
\begin{equation}\label{sec2_ineq04}
\frac{t^{1-v}-1}{1-v} \leq t-1.
\end{equation}
Since $t^{-v}-1=(t^{-1})^v-1 \leq vt^{-1}+1-v-1 = v(t^{-1}-1)$, 
\begin{equation}\label{sec2_ineq05}
\frac{t^{-v}-1}{-v} \geq 1-t^{-1}= 1-\frac{1}{t}.
\end{equation}
When $\log t > 0$, it follows from (\ref{sec2_ineq04}), (\ref{sec2_ineq03}) that 
$$
\frac{f_v(t)}{t^v} \leq \frac{1}{\log t} \{ v(t-1)+(1-v)\log t \} \leq \frac{t-1}{\log t}.
$$
When $\log t < 0$, it follows from (\ref{sec2_ineq02}), (\ref{sec2_ineq05}) that 
$$
\frac{f_v(t)}{t^v} \leq \frac{1}{\log t} \left\{ v \log t+(1-v)(1-\frac{1}{t}) \right\} \leq \frac{t-1}{t \log t}.
$$
Then we have the result. \\
(ii)~ Since $t^v \leq f_v(t)$, we have 
$$
\frac{(t^v+vt+1-v)/2}{f_v(t)} \leq \frac{(t^v+vt+1-v)/2}{t^v} = \frac{1}{2}+\frac{1}{2}\frac{\left\{ (1-v)+vt\right\}}{t^v} \leq \frac{1+S(t)}{2}.
$$
\end{proof}

Note that (i) and (ii) in Theorem \ref{thm2.1} are ratio type reverse inequalities for the first and second inequalities in \eqref{sec2_ineq01}, respectively. 

We also have the following interesting relations on the weighted logarithmic mean.
\begin{theorem}\label{theorem2.2}
For $0< v < 1$ and $t>0$ with $t\neq 1$, we have
$$
\min\left\{\frac{1-v}{v},\frac{v}{1-v} \right\}L_{1/2}(t,1)\le f_v(t) \le \max\left\{\frac{1-v}{v},\frac{v}{1-v} \right\}L_{1/2}(t,1).
$$
\end{theorem}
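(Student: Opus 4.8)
The plan is to divide through by $L_{1/2}(t,1)$ and recognize the resulting quantity as a convex combination. Write $\alpha:=\frac{1-v}{v}$ and $\beta:=\frac{v}{1-v}$, so that $\alpha\beta=1$ and hence $\min\{\alpha,\beta\}\le 1\le\max\{\alpha,\beta\}$; we may assume $0<v<1$, the endpoints being degenerate for $f_v$ (and the claimed bounds trivial there). Recalling $L_{1/2}(t,1)=\frac{t-1}{\log t}$ together with the explicit formula \eqref{sec2_ineq01a} for $f_v(t)$, and noting $L_{1/2}(t,1)>0$ for $t\neq 1$, the asserted two-sided estimate is equivalent to
$$
\min\{\alpha,\beta\}\le \frac{f_v(t)}{L_{1/2}(t,1)}=\frac{\alpha\,(t^v-1)+\beta\,(t-t^v)}{t-1}\le \max\{\alpha,\beta\}.
$$

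Next I would record the algebraic identity
$$
\frac{f_v(t)}{L_{1/2}(t,1)}=\lambda(t)\,\alpha+\bigl(1-\lambda(t)\bigr)\,\beta,\qquad \lambda(t):=\frac{t^v-1}{t-1},
$$
which holds simply because $\frac{t^v-1}{t-1}+\frac{t-t^v}{t-1}=1$. Thus the middle expression is a weighted mean of $\alpha$ and $\beta$, and the theorem follows immediately once one checks $\lambda(t)\in[0,1]$ for every $t>0$. For that: since $0\le v\le1$, the value $t^v$ always lies between $t^0=1$ and $t^1=t$, so $t^v-1$ and $t-t^v$ have the same sign as $t-1$; hence both $\lambda(t)$ and $1-\lambda(t)$ are nonnegative. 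The case $t=1$ is disposed of directly, since $f_v(1)=L_{1/2}(1,1)=1$ and $\min\{\alpha,\beta\}\le1\le\max\{\alpha,\beta\}$.

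There is essentially no obstacle beyond spotting the convex-combination structure; the only point deserving care is the sign bookkeeping when $0<t<1$, where $t-1$, $\log t$, $t^v-1$ and $t-t^v$ are simultaneously negative, so that all the ratios above stay nonnegative and dividing by $L_{1/2}(t,1)>0$ is legitimate. One may also remark that the bounds are sharp: as $t\to0^{+}$ one has $\lambda(t)\to1$, so $f_v(t)/L_{1/2}(t,1)\to\alpha$, while as $t\to\infty$ one has $\lambda(t)\to0$, so $f_v(t)/L_{1/2}(t,1)\to\beta$; thus both $\min\{\alpha,\beta\}$ and $\max\{\alpha,\beta\}$ are approached.
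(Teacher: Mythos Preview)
Your proof is correct. Both you and the paper arrive at the same algebraic decomposition
\[
\frac{f_v(t)}{L_{1/2}(t,1)}=(\alpha-\beta)\,\lambda(t)+\beta=\lambda(t)\,\alpha+(1-\lambda(t))\,\beta,\qquad \lambda(t)=\frac{t^v-1}{t-1},
\]
but you diverge at the bounding step. The paper differentiates $F(t):=(\alpha-\beta)\lambda(t)+\beta$, invokes the Young inequality $(1-v)t+v\ge t^{1-v}$ to determine the sign of $F'(t)$, and then reads off $\inf F$ and $\sup F$ from the limits at $t=0$ and $t\to\infty$, splitting into the cases $v<1/2$ and $v>1/2$. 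Your argument is shorter and calculus-free: you simply observe that $\lambda(t)\in[0,1]$, so the ratio is a genuine convex combination of $\alpha$ and $\beta$ and therefore lies in $[\min\{\alpha,\beta\},\max\{\alpha,\beta\}]$. The paper's route does yield the extra information that $\lambda(t)$ (hence $F$) is monotone in $t$, but that is not needed for the theorem; your limit remarks at $t\to0^+$ and $t\to\infty$ already recover the sharpness the paper obtains from its monotonicity analysis.
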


\begin{proof}
We have the following
\begin{eqnarray*}
f_v(t) & = & \frac{1}{\log t} \left\{ \frac{1-v}{v}(t^v-1)+\frac{v}{1-v}(t-t^v) \right\} \\
& = & \frac{1}{\log t} \left\{ \left( \frac{1-v}{v}-\frac{v}{1-v} \right)(t^v-1)+\frac{v}{1-v}(t-1) \right\} \\
& = & \frac{t-1}{\log t} \left\{ \left( \frac{1-v}{v}-\frac{v}{1-v} \right) \frac{t^v-1}{t-1}+\frac{v}{1-v} \right\} \\
& = & \frac{t-1}{\log t} \left\{ \frac{1-2v}{v(1-v)} \frac{t^v-1}{t-1}+\frac{v}{1-v} \right\}.
\end{eqnarray*}
We put $\displaystyle{F(t) = \frac{1-2v}{v(1-v)} \frac{t^v-1}{t-1}+\frac{v}{1-v}}$. Then 
\begin{eqnarray*}
F{'}(t) & = & \frac{1-2v}{v(1-v)} \frac{vt^{v-1}(t-1)-(t^v-1)}{(t-1)^2} \\
& = & \frac{1-2v}{v(1-v)} \frac{vt^v-vt^{v-1}-t^v+1}{(t-1)^2} \\
& = & \frac{1-2v}{v(1-v)} \frac{t^{v-1}(t^{1-v}-(1-v)t-v)}{(t-1)^2}.
\end{eqnarray*}
Since $(1-v)t+v \geq t^{1-v}$, $F^{'}(t) \leq 0$ for $0 < v < 1/2$.  And since 
\begin{eqnarray*}
\lim_{t \to \infty} F(t) & = & \frac{1-2v}{v(1-v)} \lim_{t \to \infty} \frac{t^v-1}{t-1}+\frac{v}{1-v} \\
& = & \frac{1-2v}{v(1-v)} \lim_{t \to \infty} vt^{v-1}+\frac{v}{1-v} \\
& = & \frac{1-2v}{v(1-v)} \lim_{t \to \infty} \frac{v}{t^{1-v}}+\frac{v}{1-v} = \frac{v}{1-v} > 0,
\end{eqnarray*}
we have $\displaystyle{\inf F(t) = \frac{v}{1-v}}$ and $\displaystyle{\sup F(t) = F(0) = \frac{1-v}{v}}$. \\
Since $(1-v)t+v \geq t^{1-v}$, $F^{'}(t) \geq 0$ for $1/2 < v < 1$.  Then we have 
$\displaystyle{\inf F(t) = \frac{1-v}{v}}$ and $\displaystyle{\sup F(t) = \lim_{t \to \infty}F(t) = \frac{v}{1-v}}$.
Thus 
$$
\min_t f_v(t) = \min \left\{\frac{1-v}{v}, \frac{v}{1-v} \right\} \frac{t-1}{\log t},  
$$
$$
\max_t f_v(t) = \max \left\{ \frac{1-v}{v}, \frac{v}{1-v} \right\} \frac{t-1}{\log t}.
$$
When $v = \frac{1}{2}, v= 0$ and $v = 1$, it is clear. Then we have the result. 
\end{proof}

At the end of this section, we give operator inequalities as consequences of our theorems.
Since $\int t^xdx = \frac{t^x}{\log t} +C$, the function $f_v(t)$ defined in \eqref{sec2_ineq01a} can be expressed by
$$
f_v(t) = \frac{1-v}{v}\int_0^vt^xdx+\frac{v}{1-v}\int_v^1t^xdx.
$$
For positive operators $A,B$, we define the weighted operator logarithmic mean as
$$
A\ell_v B:= A^{1/2}f_v\left(A^{-1/2}BA^{-1/2}\right)A^{1/2}=
\frac{1-v}{v}\int_0^v A\sharp_x Bdx +\frac{v}{1-v}\int_v^1 A\sharp_x Bdx\,\,\,\text{for}\,\,\,0<v<1.
$$
We see $A\ell_{1/2}B=\int_0^1 A\sharp_xB dx$ which is the operator logarithmic mean. We also use the standard notations for the weighted arithmetic mean $A\nabla_v B$, the weighted geometric mean $A\sharp_v B$ and the weighted harmonic mean $A!_vB$ for positive operators $A,B$ and $0\le v \le 1$:
$$
A\nabla_vB:=(1-v)A+vB,\,\,A\sharp_vB:=A^{1/2}\left(A^{-1/2}BA^{-1/2}\right)^vA^{1/2},\,\,A!_vB:=\left\{(1-v)A^{-1}+vB^{-1}\right\}^{-1},
$$
respectively. It was shown that \cite{PSMA2016}:
$$
A!_vB\le A\sharp_vB \le A\ell_v B \le \frac{1}{2}\left(A\sharp_vB+A\nabla_vB\right) \le A\nabla_vB.
$$
From Theorem \ref{theorem2.2}, we have the following inequalities.
\begin{corollary}
For $0< v < 1$ and positive operators $A,B$, we have
$$
\min\left\{\frac{1-v}{v},\frac{v}{1-v} \right\} A\ell_{1/2} B \le
A\ell_v B \le \max\left\{\frac{1-v}{v},\frac{v}{1-v} \right\} A\ell_{1/2} B.
$$ 
\end{corollary}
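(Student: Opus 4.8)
The plan is to transfer the scalar inequality of Theorem \ref{theorem2.2} to operators by the standard functional-calculus argument. Fix positive operators $A,B$ (with $A$ invertible, as is implicit in the definition of $A\ell_v B$) and put $X:=A^{-1/2}BA^{-1/2}\ge 0$. Recalling that $f_{1/2}(t)=L_{1/2}(t,1)=\frac{t-1}{\log t}$, Theorem \ref{theorem2.2} reads, for every $t>0$,
$$
\min\left\{\frac{1-v}{v},\frac{v}{1-v}\right\}f_{1/2}(t)\ \le\ f_v(t)\ \le\ \max\left\{\frac{1-v}{v},\frac{v}{1-v}\right\}f_{1/2}(t).
$$
Since $f_v$ and $f_{1/2}$ are continuous on $[0,\infty)$ (the singularity at $t=1$ is removable and $f_v(0)=0$), the spectral theorem turns this pointwise comparison of continuous functions on a set containing $\operatorname{spec}(X)$ into the corresponding operator inequality obtained by replacing $t$ with $X$.

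Next I would conjugate by $A^{1/2}$. The congruence $Y\mapsto A^{1/2}YA^{1/2}$ preserves the operator order and commutes with multiplication by scalars, so applying it to the preceding operator inequality and using the definitions
$$
A\ell_v B=A^{1/2}f_v(X)A^{1/2},\qquad A\ell_{1/2}B=A^{1/2}f_{1/2}(X)A^{1/2},
$$
yields precisely the asserted chain
$$
\min\left\{\frac{1-v}{v},\frac{v}{1-v}\right\}A\ell_{1/2}B\ \le\ A\ell_v B\ \le\ \max\left\{\frac{1-v}{v},\frac{v}{1-v}\right\}A\ell_{1/2}B.
$$

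Finally I would dispose of the degenerate values of $v$: for $v=\tfrac12$ both constants equal $1$ and the statement is an equality, while for $v\in\{0,1\}$ one has $A\ell_0B=A$, $A\ell_1B=B$, the lower constant is $0$ (so the lower bound is trivial) and the upper bound is read in the obvious limiting sense, exactly as in the scalar discussion at the end of the proof of Theorem \ref{theorem2.2}. I do not anticipate a genuine obstacle here; the only subtlety worth flagging is that the argument relies on the \emph{pointwise} estimates $c\,f_{1/2}(t)\le f_v(t)\le C\,f_{1/2}(t)$ rather than on any operator monotonicity of $f_v$, which is exactly what makes it legitimate to insert the single operator $X$ into all three functions and then transport the inequality through the congruence by $A^{1/2}$.
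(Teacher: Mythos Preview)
Your proposal is correct and is essentially the same approach as the paper's: the paper states the corollary with no proof, simply saying ``From Theorem \ref{theorem2.2}, we have the following inequalities,'' and you have written out the standard functional-calculus/congruence argument that is implicit in that remark. There is nothing to add or correct.
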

From Theorem \ref{thm2.1}, we also have the following results.
\begin{corollary}
For $0< v < 1$ and positive operators $A,B$ such that $\alpha A\leq B \leq \beta A$ with $0 <\alpha \le \beta$, we have
\begin{itemize}
\item[(i)] $A\ell_v B \le k_1\cdot A\sharp_v B$, where $k_1:=\max\limits_{\alpha\le t \le \beta}\left(\max\left\{1,1/t\right\}\cdot f_{1/2}(t)\right)$. 
\item[(ii)] $A\sharp_v B+A \nabla_v B \le k_2 \cdot A\ell_v B$, where $k_2:=\max\limits_{\alpha\le t \le \beta}\left(S(t)+1\right)$. 
\end{itemize}
\end{corollary}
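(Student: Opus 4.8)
The plan is to deduce both inequalities from Theorem \ref{thm2.1} by the standard device of lifting a scalar inequality, valid on an interval containing the spectrum, to an operator inequality via functional calculus and congruence. Set $X:=A^{-1/2}BA^{-1/2}$. Conjugating the assumption $\alpha A\le B\le\beta A$ by $A^{-1/2}$ gives $\alpha I\le X\le\beta I$, so $X$ is positive with spectrum contained in $[\alpha,\beta]$. Recall also that $L_{1/2}(t,1)=f_{1/2}(t)=\frac{t-1}{\log t}$, and that $f_{1/2}$, $f_v$ and $S$ all extend continuously to $t=1$ (with value $1$ there); hence $t\mapsto\max\{1,1/t\}\,f_{1/2}(t)$ and $t\mapsto S(t)+1$ are continuous on the compact set $[\alpha,\beta]$ and the maxima $k_1,k_2$ are finite.

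For (i), Theorem \ref{thm2.1}(i) reads $f_v(t)\le\bigl(\max\{1,1/t\}\,f_{1/2}(t)\bigr)t^v$ for all $t>0$. On $[\alpha,\beta]$ the bracketed factor is at most $k_1$, and since $t^v\ge0$ this yields $f_v(t)\le k_1 t^v$ for every $t$ in $[\alpha,\beta]$. Applying the functional calculus at $X$ (whose spectrum lies in $[\alpha,\beta]$) gives $f_v(X)\le k_1 X^v$, and since $Y\mapsto A^{1/2}YA^{1/2}$ is order-preserving we get $A^{1/2}f_v(X)A^{1/2}\le k_1 A^{1/2}X^vA^{1/2}$, i.e. $A\ell_v B\le k_1\,A\sharp_v B$. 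For (ii), rearrange Theorem \ref{thm2.1}(ii) to $t^v+(1-v)+vt\le(S(t)+1)f_v(t)$; since $f_v(t)\ge0$ and $S(t)+1\le k_2$ on $[\alpha,\beta]$, we obtain $t^v+(1-v)+vt\le k_2 f_v(t)$ on that interval. Evaluating at $X$ gives $X^v+(1-v)I+vX\le k_2 f_v(X)$, and conjugating by $A^{1/2}$ together with the identities $A^{1/2}X^vA^{1/2}=A\sharp_v B$, $A^{1/2}\bigl((1-v)I+vX\bigr)A^{1/2}=A\nabla_v B$ and $A^{1/2}f_v(X)A^{1/2}=A\ell_v B$ produces $A\sharp_v B+A\nabla_v B\le k_2\,A\ell_v B$.

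There is essentially no deep obstacle here: the only point requiring care is that the scalar inequalities of Theorem \ref{thm2.1} must hold on the \emph{entire} spectrum of $X$, which is precisely why the sandwich condition $\alpha A\le B\le\beta A$ is imposed and why $k_1,k_2$ are taken as maxima over $[\alpha,\beta]$ rather than over all of $(0,\infty)$ (where $\max\{1,1/t\}f_{1/2}(t)$ and $S(t)+1$ are unbounded). Everything else is the routine mechanism that the Borel functional calculus turns pointwise inequalities on the spectrum into operator inequalities and that congruence by $A^{1/2}$ preserves the operator order.
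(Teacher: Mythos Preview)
Your proof is correct and follows exactly the approach the paper intends: the corollary is stated without proof, merely as a consequence of Theorem~\ref{thm2.1}, and your argument supplies precisely the standard passage from the scalar inequalities of that theorem to operator inequalities via the spectral condition $\sigma(A^{-1/2}BA^{-1/2})\subset[\alpha,\beta]$, functional calculus, and conjugation by $A^{1/2}$. There is nothing to add.
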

If $\alpha \geq 1$, then $k_1=\dfrac{\beta-1}{\log \beta}$ and $k_2=S(\beta)+1$. If $\beta \le 1$, then $k_1=\dfrac{\alpha -1}{\alpha \log \alpha}$ and $k_2=S(\alpha)+1$. A few complicated analysis are possible for the constants $k_1$ and $k_2$ for the other cases. We omit them here.

\section{Convex functions approach}
This section gives upper and lower bounds on the gap in Jensen's inequality, i.e., 
\[\sum\limits_{i=1}^{n}{{{p}_{i}}f\left( {{a}_{i}} \right)}-f\left( \sum\limits_{i=1}^{n}{{{p}_{i}}{{a}_{i}}} \right)\]
where $f:J\subseteq \mathbb{R}\to \mathbb{R}$ is a convex function,  $a_i \in J$, and $p_i\geq 0$ with $\sum\limits_{i=1}^n p_i =1$. The first result contains several arithmetic means inequalities.
\begin{theorem}\label{prop3.1}
Let $f:J\subset \mathbb{R} \to [0,\infty)$ be a convex function and let $a_i \in J$, $p_i\geq 0$ and $\sum\limits_{i=1}^n p_i =1$. Then we have for any natural number $m$,
\begin{equation}\label{prop3.1_eq01}
\frac{A_f^m-G_f^m}{m A_f^{m-1}}\leq A_f-G_f\leq \frac{A_f^m-G_f^m}{m G_f^{m-1}},
\end{equation}
where
$$
A_f:=A_f(a_1,\cdots,a_n;p_1,\cdots,p_n):=\sum_{i=1}^n p_i f(a_i),$$
and
$$ G_f:=G_f(a_1,\cdots,a_n;p_1,\cdots,p_n):=f\left(\sum_{i=1}^np_ia_i\right).
$$
\end{theorem}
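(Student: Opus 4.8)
The plan is to reduce the two-sided estimate \eqref{prop3.1_eq01} to the elementary fact that the power function $x\mapsto x^m$ is convex and increasing on $[0,\infty)$. First I would record the two inputs that make the statement meaningful: since $f$ is convex, Jensen's inequality gives $G_f=f\!\left(\sum_{i=1}^n p_ia_i\right)\le \sum_{i=1}^n p_if(a_i)=A_f$, and since $f\ge 0$ we also have $A_f\ge G_f\ge 0$, so every power occurring in \eqref{prop3.1_eq01} is well defined and nonnegative.

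The core step is the gradient (tangent line) inequality for the convex function $g(x)=x^m$ on $[0,\infty)$, with $g'(x)=mx^{m-1}$: for any $0\le b\le a$,
\[
g'(b)(a-b)\le g(a)-g(b)\le g'(a)(a-b),
\]
i.e. $mb^{m-1}(a-b)\le a^m-b^m\le ma^{m-1}(a-b)$. (Alternatively one may use the identity $a^m-b^m=(a-b)\sum_{k=0}^{m-1}a^{m-1-k}b^k$ and bound each of the $m$ summands between $b^{m-1}$ and $a^{m-1}$.) Substituting $a=A_f$ and $b=G_f$ yields
\[
mG_f^{m-1}(A_f-G_f)\le A_f^m-G_f^m\le mA_f^{m-1}(A_f-G_f).
\]
Dividing the left-hand inequality by $mA_f^{m-1}$ and the right-hand one by $mG_f^{m-1}$ then gives exactly \eqref{prop3.1_eq01}.

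There is essentially no obstacle beyond bookkeeping with the degenerate cases: when $m=1$ both displayed bounds are equalities; when $A_f=0$, the assumption $f\ge 0$ forces $f(a_i)=0$ for all $i$ with $p_i>0$, hence $G_f=0$ and both sides of \eqref{prop3.1_eq01} vanish; and when $G_f=0<A_f$ the right-hand side of \eqref{prop3.1_eq01} is read as $+\infty$ (equivalently, one assumes $G_f>0$ in order to divide by $G_f^{m-1}$). The one point that really needs attention is to use the hypothesis $f\ge 0$ — rather than mere convexity of $f$ — so that $A_f$ and $G_f$ lie in $[0,\infty)$, the range on which $x\mapsto x^m$ is convex and increasing and the basic two-sided bound above is legitimate.
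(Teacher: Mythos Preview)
Your argument is essentially identical to the paper's: both reduce \eqref{prop3.1_eq01} to the elementary two--sided bound $mb^{m-1}(a-b)\le a^m-b^m\le ma^{m-1}(a-b)$ for $a\ge b\ge 0$ (the paper obtains it via the factorization identity you mention as an alternative) and then substitute $a=A_f$, $b=G_f$ using Jensen's inequality. One small slip in your write--up: it is the \emph{right}--hand inequality $A_f^m-G_f^m\le mA_f^{m-1}(A_f-G_f)$ that one divides by $mA_f^{m-1}$, and the \emph{left}--hand inequality $mG_f^{m-1}(A_f-G_f)\le A_f^m-G_f^m$ that one divides by $mG_f^{m-1}$, not the other way around.
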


\begin{proof}
We use the identity $a^m-b^m=(a-b)(a^{m-1}+a^{m-2}b+\cdots +ab^{m-2}+b^{m-1})$. If $a \geq b > 0$, then we have
\begin{eqnarray*}
\frac{a^{m-1}+a^{m-2}b+\cdots +ab^{m-2}+b^{m-1}}{ma^{m-1}}
&\le& \frac{a^{m-1}+a^{m-2}b+\cdots +ab^{m-2}+b^{m-1}}{a^{m-1}+a^{m-2}b+\cdots +ab^{m-2}+b^{m-1}}\\
&\le& \frac{a^{m-1}+a^{m-2}b+\cdots +ab^{m-2}+b^{m-1}}{mb^{m-1}}.
\end{eqnarray*}
Multiplying $a-b \geq 0$ to both sides in the inequalities above, we have
\begin{equation}\label{prop3.1_eq02}
\frac{a^m-b^m}{ma^{m-1}}\leq a-b \leq \frac{a^m-b^m}{mb^{m-1}}.
\end{equation}
From Jensen inequality, we have $A_f\geq G_f$. Thus we get \eqref{prop3.1_eq01} by putting $a:=A_f$ and $b:=G_f$ in \eqref{prop3.1_eq02}.
\end{proof}

Applying Theorem \ref{prop3.1} to $m=2$, gives the following statement.
\begin{corollary}\label{8}
Let $f:[a,b]\to [0,\infty)$ be a convex function and let $0\leq v\leq 1$. Then
\[\begin{aligned}
  \frac{{{\left( f\left( a \right)\nabla_{v} f\left( b \right) \right)}^{2}}-{{f}^{2}}\left( a\nabla_{v} b \right)}{2\left(f\left( a \right)\nabla_{v} f\left( b \right)\right)} &\le f\left( a \right)\nabla_{v} f\left( b \right)-f\left( a\nabla_{v} b \right) \\ 
 & \le \frac{{{\left( f\left( a \right)\nabla_{v} f\left( b \right) \right)}^{2}}-{{f}^{2}}\left( a\nabla_{v} b \right)}{2f\left( a\nabla_{v} b \right)}.  
\end{aligned}\]
\end{corollary}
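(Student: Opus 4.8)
The plan is to derive this as an immediate specialization of Theorem \ref{prop3.1}. First I would set $m=2$ in \eqref{prop3.1_eq01}, which collapses the general two-sided bound into
\[
\frac{A_f^2-G_f^2}{2A_f}\le A_f-G_f\le \frac{A_f^2-G_f^2}{2G_f}.
\]
Then I would restrict to the two-point data $n=2$, $a_1=a$, $a_2=b\in[a,b]$, with weights $p_1=1-v$ and $p_2=v$ for $0\le v\le 1$; this is an admissible choice in Theorem \ref{prop3.1} since $p_i\ge 0$ and $p_1+p_2=1$.

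With this data the quantities appearing in Theorem \ref{prop3.1} become $A_f=(1-v)f(a)+vf(b)=f(a)\nabla_v f(b)$ and $G_f=f\left((1-v)a+vb\right)=f\left(a\nabla_v b\right)$, using the scalar notation $x\nabla_v y=(1-v)x+vy$. Substituting these two identifications into the displayed inequality above reproduces verbatim the chain asserted in the corollary, so no further computation is required.

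There is essentially no obstacle here; the only point deserving a remark is that the rightmost denominator $G_f=f(a\nabla_v b)$ should be positive for that term to be meaningful. Since $f$ takes values in $[0,\infty)$, the only exceptional case is $f(a\nabla_v b)=0$, which forces $a\nabla_v b$ to be a global minimizer of the nonnegative convex function $f$; then the left inequality degenerates to the trivial $A_f/2\le A_f$ and the right inequality is read as vacuous. All the substance is already carried by Theorem \ref{prop3.1}, which in turn rests only on the elementary estimate \eqref{prop3.1_eq02} and Jensen's inequality.
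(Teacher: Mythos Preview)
Your proposal is correct and matches the paper's approach exactly: the paper records Corollary~\ref{8} as the immediate specialization of Theorem~\ref{prop3.1} obtained by taking $m=2$ (with the two-point, weight-$(1-v,v)$ choice implicit), and that is precisely what you do. Your extra remark about the degenerate case $f(a\nabla_v b)=0$ is a reasonable aside but not part of the paper's treatment.
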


The well-known Hermite-Hadamard inequality states that
\begin{equation}\label{1}
f\left(\frac{a+b}{2}\right)\le\int_0^1 f(a\nabla_{v}b)dv \le \frac{f(a)+f(b)}{2} 
\end{equation}
for a convex function $f:[a,b]\to (0,\infty)$. 
By Corollary \ref{8}, we have a new interpolation of \eqref{1}.
\begin{corollary}\label{0}
Let $f:[a,b]\to (0,\infty)$ be a convex function. Then we have
\begin{eqnarray*}
 \int_0^1f(a\nabla_{v}b)dv-\int_0^1\frac{f^2(a\nabla_{v}b)}{f(a)\nabla_{v}f(b)}dv  &\le& 
\frac{f(a)+f(b)}{2}-\int_0^1f(a\nabla_{v}b)dv \\ 
&\le& \int_0^1\frac{\left(f(a)\nabla_{v}f(b)\right)^2}{f(a\nabla_{v}b)}dv-\frac{f(a)+f(b)}{2}.
\end{eqnarray*}
\end{corollary}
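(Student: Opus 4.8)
The plan is to integrate the two-sided estimate of Corollary~\ref{8} with respect to $v$ over $[0,1]$ and then rearrange. Fix a convex $f:[a,b]\to(0,\infty)$; then Corollary~\ref{8} applies, and strict positivity of $f$ guarantees that none of the denominators appearing below vanish. For every $v\in[0,1]$ Corollary~\ref{8} gives the pointwise chain
$$\frac{(f(a)\nabla_v f(b))^2-f^2(a\nabla_v b)}{2(f(a)\nabla_v f(b))}\ \le\ f(a)\nabla_v f(b)-f(a\nabla_v b)\ \le\ \frac{(f(a)\nabla_v f(b))^2-f^2(a\nabla_v b)}{2f(a\nabla_v b)},$$
and I would simply integrate this over $[0,1]$, using monotonicity of the integral.

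To evaluate the three resulting integrals I would use only the elementary moment identity
$$\int_0^1\big(f(a)\nabla_v f(b)\big)\,dv=\int_0^1\big((1-v)f(a)+vf(b)\big)\,dv=\frac{f(a)+f(b)}{2},$$
together with the splitting of $(f(a)\nabla_v f(b))^2-f^2(a\nabla_v b)$ over the respective denominator. Writing $I:=\int_0^1 f(a\nabla_v b)\,dv$, this turns the middle term into $\tfrac{f(a)+f(b)}{2}-I$, the left-hand term into $\tfrac{f(a)+f(b)}{4}-\tfrac12\int_0^1\frac{f^2(a\nabla_v b)}{f(a)\nabla_v f(b)}\,dv$, and the right-hand term into $\tfrac12\int_0^1\frac{(f(a)\nabla_v f(b))^2}{f(a\nabla_v b)}\,dv-\tfrac{I}{2}$. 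Multiplying each of the two inequalities so obtained by $2$ and performing an elementary rearrangement (adding $I$, then subtracting $\tfrac{f(a)+f(b)}{2}$) produces exactly the two asserted inequalities, which exhibit the Hermite--Hadamard gap $\tfrac{f(a)+f(b)}{2}-\int_0^1 f(a\nabla_v b)\,dv$ of \eqref{1} sandwiched between the two displayed quantities.

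The argument presents no genuine obstacle: it is the integration of a pointwise inequality plus one trivial moment computation. The only point deserving a word of care is the well-definedness of the integrals. Since a convex function on $[a,b]$ is bounded there (indeed $f(x)\le\max\{f(a),f(b)\}$ for $x\in[a,b]$) and $f(a)\nabla_v f(b)$ lies in $[\min\{f(a),f(b)\},\max\{f(a),f(b)\}]$ with $\min\{f(a),f(b)\}>0$, the integrals $\int_0^1\frac{f^2(a\nabla_v b)}{f(a)\nabla_v f(b)}\,dv$ and $I$ are finite; the integral $\int_0^1\frac{(f(a)\nabla_v f(b))^2}{f(a\nabla_v b)}\,dv$ is finite as well whenever $f$ is continuous on $[a,b]$, and if it were $+\infty$ the upper inequality would be trivially true. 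Hence no hypothesis beyond convexity and positivity is actually required.
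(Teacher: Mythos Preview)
Your argument is correct and follows essentially the same route as the paper: integrate the pointwise inequality of Corollary~\ref{8} over $v\in[0,1]$, use $\int_0^1(f(a)\nabla_v f(b))\,dv=\tfrac{f(a)+f(b)}{2}$, and rearrange. Your extra paragraph on the finiteness of the integrals is a reasonable addition that the paper omits.
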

\begin{proof}
Taking integral over $0\le v \le 1$ in Corollary \ref{8}, we get
\[\begin{aligned}
   \frac{1}{2}\int_{0}^{1}{\left( f\left( a \right){{\nabla }_{v }}f\left( b \right) \right)dv }-\frac{1}{2}\int_{0}^{1}{\frac{{{f}^{2}}\left( a{{\nabla }_{v }}b \right)}{\left( f\left( a \right){{\nabla }_{v }}f\left( b \right) \right)}dv } 
 & \le \int_{0}^{1}{\left( f\left( a \right){{\nabla }_{v }}f\left( b \right) \right)dv }-\int_{0}^{1}{f\left( a{{\nabla }_{v }}b \right)dv } \\ 
 & \le \frac{1}{2}\int_{0}^{1}{\frac{{{\left( f\left( a \right){{\nabla }_{v }}f\left( b \right) \right)}^{2}}}{f\left( a{{\nabla }_{v }}b \right)}dv }-\frac{1}{2}\int_{0}^{1}{f\left( a{{\nabla }_{v }}b \right)dv }  
\end{aligned}\]
or equivalently,
\[\begin{aligned}
   \frac{1}{2}\frac{f\left( a \right)+f\left( b \right)}{2}-\frac{1}{2}\int_{0}^{1}{\frac{{{f}^{2}}\left( a{{\nabla }_{v }}b \right)}{\left( f\left( a \right){{\nabla }_{v }}f\left( b \right) \right)}dv } 
 & \le \frac{f\left( a \right)+f\left( b \right)}{2}-\int_{0}^{1}{f\left( a{{\nabla }_{v }}b \right)dv } \\ 
 & \le \frac{1}{2}\int_{0}^{1}{\frac{{{\left( f\left( a \right){{\nabla }_{v }}f\left( b \right) \right)}^{2}}}{f\left( a{{\nabla }_{v }}b \right)}dv }-\frac{1}{2}\int_{0}^{1}{f\left( a{{\nabla }_{v }}b \right)dv}.  
\end{aligned}\]
Adding $-\frac{1}{2}\frac{f(a)+f(b)}{2}$ and $\frac{1}{2}\int_0^1f(a\nabla_{v}b)dv$ to all sides in the above inequalities, then we have the desired inequalities. 
\end{proof}

\begin{remark}

From the first inequality in Corollary \ref{0}, with $f (a\nabla_{v}b) >0$ and $\dfrac{f(a\nabla_{v}b)}{f(a)\nabla_{v}f(b)}\leq 1$, we have
$$
\int_0^1 f(a\nabla_{v}b) dv \leq 2\int_0^1f(a\nabla_{v}b) dv -\int_0^1\frac{f^2(a\nabla_{v}b)}{f(a)\nabla_{v}f(b)}dv \le \frac{f(a)+f(b)}{2}.
$$
\end{remark}

The following result establishes an interpolation between the arithmetic and the logarithmic means.
\begin{theorem}\label{theorem3.2}
For $a,b>0$ and $0\le v \le 1$, we have
$$
L_{1/2}(a,b) \le \frac{1}{2}\left(a\nabla_{1/2} b +\int_0^1 \frac{\left(a\sharp_{v}b\right)^2}{a\nabla_{v}b}dv\right) \le a\nabla_{1/2} b.
$$
\end{theorem}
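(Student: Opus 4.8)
The plan is to deduce Theorem \ref{theorem3.2} from Corollary \ref{0} by specialising that corollary to an exponential function, the right-hand inequality then being finished off by the elementary weighted Young inequality. Throughout one may assume $a\neq b$, since for $a=b$ all three quantities collapse to $a$.

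First I would apply Corollary \ref{0} to the convex, strictly positive function $g(x):=e^{x}$ on the interval with endpoints $\log a$ and $\log b$; that is, in the statement of Corollary \ref{0} one replaces $a,b,f$ by $\log a,\log b,g$. (Orientation is irrelevant: every $v$-integral appearing is invariant under $v\mapsto 1-v$, which swaps the two endpoints, while $L_{1/2}$ and the arithmetic mean are symmetric.) The key observation is that $g$ turns the weighted arithmetic mean of $\log a,\log b$ into the weighted geometric mean of $a,b$: since $(1-v)\log a+v\log b=\log\bigl(a^{1-v}b^{v}\bigr)$ one gets $g\bigl((1-v)\log a+v\log b\bigr)=a\sharp_{v}b$ and $(1-v)g(\log a)+v\,g(\log b)=a\nabla_{v}b$. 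Consequently the three quantities in Corollary \ref{0} become $\int_{0}^{1}g\bigl((1-v)\log a+v\log b\bigr)\,dv=\int_{0}^{1}a^{1-v}b^{v}\,dv=L_{1/2}(a,b)$, then $\tfrac12\bigl(g(\log a)+g(\log b)\bigr)=a\nabla_{1/2}b$, and finally $\int_{0}^{1}\frac{g^{2}(\,\cdot\,)}{(1-v)g(\log a)+v g(\log b)}\,dv=\int_{0}^{1}\frac{(a\sharp_{v}b)^{2}}{a\nabla_{v}b}\,dv$. Feeding these into the first inequality of Corollary \ref{0} yields $L_{1/2}(a,b)-\int_{0}^{1}\frac{(a\sharp_{v}b)^{2}}{a\nabla_{v}b}\,dv\le a\nabla_{1/2}b-L_{1/2}(a,b)$, which is exactly the left inequality of Theorem \ref{theorem3.2} after rearranging.

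For the right inequality it suffices to show $\int_{0}^{1}\frac{(a\sharp_{v}b)^{2}}{a\nabla_{v}b}\,dv\le a\nabla_{1/2}b$, since averaging this with the trivial $a\nabla_{1/2}b\le a\nabla_{1/2}b$ gives the claim. Here I would just use $a\sharp_{v}b\le a\nabla_{v}b$ (weighted Young) to obtain the pointwise bound $\frac{(a\sharp_{v}b)^{2}}{a\nabla_{v}b}\le a\nabla_{v}b$ and integrate, using $\int_{0}^{1}\bigl((1-v)a+vb\bigr)\,dv=\tfrac{a+b}{2}=a\nabla_{1/2}b$.

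I do not expect a genuine obstacle: the whole argument is a change of variables in an already-proved corollary plus a one-line estimate. The only things to be careful about are verifying the hypotheses of Corollary \ref{0} for $g(x)=e^{x}$ (convexity and positivity, both immediate) and the identity $\int_{0}^{1}a^{1-v}b^{v}\,dv=\frac{b-a}{\log b-\log a}=L_{1/2}(a,b)$. As an aside, the right inequality could instead be extracted from the Remark following Corollary \ref{0}, which applied to the same $g$ gives $\int_{0}^{1}\frac{(a\sharp_{v}b)^{2}}{a\nabla_{v}b}\,dv\le L_{1/2}(a,b)$, combined with $L_{1/2}(a,b)\le a\nabla_{1/2}b$; the direct estimate above is, however, shorter and self-contained.
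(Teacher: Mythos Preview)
Your proof is correct and follows essentially the same route as the paper: apply the first inequality of Corollary \ref{0} with $f=\exp$ on the interval $[\log a,\log b]$ to obtain the left inequality, then use the weighted Young inequality $a\sharp_{v}b\le a\nabla_{v}b$ pointwise and integrate to get the right inequality. The paper's write-up is slightly more terse but the ingredients and their order are identical.
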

\begin{proof}
From the first inequality in Corollary \ref{0} we have
$$
2\int_{0}^{1}{f\left( a\nabla_{v} b \right)dv}\le \frac{f\left( a \right)+f\left( b \right)}{2}+\int_{0}^{1} \frac{f^2(a\nabla_{v}b)}{f(a)\nabla_{v}f(b)}dv.
$$
Now, replacing $a$ and $b$ by $\log a$ and $\log b$, respectively, we get
$$
2\int_0^1f(\log a^{1-v}b^{v})dv \le \frac{f(\log a)+f(\log b)}{2}+\int_{0}^{1} \frac{f^2(\log a^{1-v}b^{v})}{f(\log a)\nabla_{v}f(\log b)}dv.
$$
By choosing $f\left( v \right)=\exp v$, we have
$$
2\int_0^1 a^{1-v}b^{v}dv \le \frac{a+b}{2}+\int_0^1\frac{\left(a^{1-v}b^{v}\right)^2}{(1-v)a+v b}dv.
$$
Since $a^{1-v}b^{v} \le (1-v)a+v b$, we have
\begin{eqnarray*}
2 \left(\frac{b-a}{\log b-\log a}\right) &\le& \frac{a+b}{2}+\int_0^1\frac{\left(a^{1-v}b^{v}\right)^2}{(1-v)a+v b}dv\\
&\le& \frac{a+b}{2}+\int_0^1 \left((1-v)a+v b\right)dv =a+b
\end{eqnarray*}
which completes the proof.
\end{proof}


\begin{corollary}\label{cor_ref_amgm}
Let $x,y>0$. Then
\begin{equation}
\frac{(x-y)^2}{4(x+y)}\leq \frac{x+y}{2}-\sqrt{xy}\leq \frac{(x-y)^2}{8\sqrt{xy}}.
\end{equation}
\end{corollary}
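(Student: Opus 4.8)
The plan is to obtain this as a direct specialization of Corollary \ref{8}. Taking $v=\tfrac12$ there, one has $f(a)\nabla_{1/2}f(b)=\tfrac{f(a)+f(b)}{2}$ and $a\nabla_{1/2}b=\tfrac{a+b}{2}$, so Corollary \ref{8} reduces to
\[
\frac{\bigl(\tfrac{f(a)+f(b)}{2}\bigr)^{2}-f^{2}\bigl(\tfrac{a+b}{2}\bigr)}{f(a)+f(b)}\le \frac{f(a)+f(b)}{2}-f\Bigl(\tfrac{a+b}{2}\Bigr)\le \frac{\bigl(\tfrac{f(a)+f(b)}{2}\bigr)^{2}-f^{2}\bigl(\tfrac{a+b}{2}\bigr)}{2f\bigl(\tfrac{a+b}{2}\bigr)}.
\]

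Next I would choose the convex function $f(t)=e^{t}$, which is convex on all of $\mathbb{R}$ and takes values in $(0,\infty)\subseteq[0,\infty)$, so the hypotheses of Corollary \ref{8} are satisfied on any interval, and then substitute $a\mapsto\log x$, $b\mapsto\log y$ for $x,y>0$. Then $f(a)=x$, $f(b)=y$ and $f\bigl(\tfrac{a+b}{2}\bigr)=e^{(\log x+\log y)/2}=\sqrt{xy}$, so the displayed chain becomes
\[
\frac{\bigl(\tfrac{x+y}{2}\bigr)^{2}-xy}{x+y}\le \frac{x+y}{2}-\sqrt{xy}\le \frac{\bigl(\tfrac{x+y}{2}\bigr)^{2}-xy}{2\sqrt{xy}}.
\]

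Finally I would simplify using the elementary identity $\bigl(\tfrac{x+y}{2}\bigr)^{2}-xy=\tfrac{(x+y)^{2}-4xy}{4}=\tfrac{(x-y)^{2}}{4}$, which turns the left bound into $\tfrac{(x-y)^{2}}{4(x+y)}$ and the right bound into $\tfrac{(x-y)^{2}}{8\sqrt{xy}}$, giving exactly the claimed inequality. There is essentially no obstacle here: the only remarks needed are that when $x=y$ the statement is the trivial equality $0\le 0\le 0$, and (if one wishes to invoke the interval $[a,b]$ literally) that one may assume $x\le y$ without loss of generality by the symmetry of the statement in $x$ and $y$.
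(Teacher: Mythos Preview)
Your proof is correct and follows essentially the same route as the paper: apply Corollary \ref{8} with $v=\tfrac12$ to an exponential-type function so that $f(a)=x$, $f(b)=y$, and $f\bigl(\tfrac{a+b}{2}\bigr)=\sqrt{xy}$. The only cosmetic difference is the parametrization: the paper takes $f(t)=x^{1-t}y^{t}$ on $[0,1]$ with $a=0$, $b=1$, whereas you take $f(t)=e^{t}$ with $a=\log x$, $b=\log y$; since $x^{1-t}y^{t}=e^{(1-t)\log x+t\log y}$, these are the same substitution up to an affine change of variable.
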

\begin{proof}
Given $x,y>0,$ define the function $f:[0,1]\to [0,\infty)$ by $f(t)=x^{1-t}y^{t}.$ Letting $a=0,b=1$ in Corollary \ref{8}, we have
$$f(a)=x, f(b)=y, f\left(a\nabla_{v} b\right)=x\sharp_{v}y\;{\text{and}}\;f(a)\nabla_{v} f(b)=x\nabla_{v}y.$$
Substituting these values in Theorem \ref{8} with $v=\frac{1}{2}$ implies the desired inequalities.
\end{proof}

One can formulate a noncommutative version of Corollary \ref{cor_ref_amgm}.
\begin{corollary}
For positive operators $A,B$, we have
$$
\frac{1}{4}\left(B-3A\right) +\frac{1}{2} A !_{1/2}\left(AB^{-1}A\right)  \le A\nabla_{1/2}B-A\sharp_{1/2}B \le \frac{1}{8}\left(A\natural_{3/2}B-2A\sharp_{1/2}B+A\natural_{-1/2}B\right),
$$
where $A\natural_vB := A^{1/2}\left(A^{-1/2}BA^{-1/2}\right)^{v} A^{1/2}$ is defined for all real number $v$.
\end{corollary}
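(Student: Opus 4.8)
The plan is to deduce the operator statement from the scalar refinement of the arithmetic--geometric mean inequality in Corollary~\ref{cor_ref_amgm}, by specializing it to $x=1$, $y=t$, rewriting the two outer expressions as honest continuous functions of $t>0$, and then transporting the whole chain of inequalities through the functional calculus of the positive operator $X:=A^{-1/2}BA^{-1/2}$ followed by congruence with $A^{1/2}$.

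First I would put $x=1$ and $y=t>0$ in Corollary~\ref{cor_ref_amgm}, obtaining
$$\frac{(1-t)^2}{4(1+t)}\;\le\;\frac{1+t}{2}-\sqrt t\;\le\;\frac{(1-t)^2}{8\sqrt t}.$$
Then I recast the outer terms. Polynomial division gives the identity $(1-t)^2=(1+t)(t-3)+4$, so the left-hand side equals $\frac14(t-3)+\frac{1}{1+t}$; expanding the square and dividing by $\sqrt t$, the right-hand side equals $\frac18\left(t^{-1/2}-2t^{1/2}+t^{3/2}\right)$. Hence, for all $t>0$,
$$\frac14(t-3)+\frac{1}{1+t}\;\le\;\frac{1+t}{2}-\sqrt t\;\le\;\frac18\left(t^{-1/2}-2t^{1/2}+t^{3/2}\right).$$

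Next, let $X:=A^{-1/2}BA^{-1/2}$, a positive invertible operator whose spectrum lies in $(0,\infty)$. Applying the functional calculus of $X$ to the displayed scalar inequality and then conjugating both sides by $A^{1/2}$ preserves the operator order, so it only remains to identify the three resulting operators. For the middle one, $A^{1/2}\left(\frac{1+X}{2}-X^{1/2}\right)A^{1/2}=A\nabla_{1/2}B-A\sharp_{1/2}B$. For the left one, $A^{1/2}\cdot\frac14(X-3)\cdot A^{1/2}=\frac14(B-3A)$, while for the remaining summand one computes $A^{-1/2}(AB^{-1}A)A^{-1/2}=A^{1/2}B^{-1}A^{1/2}=X^{-1}$, hence $1!_{1/2}X^{-1}=\left(\frac12+\frac12 X\right)^{-1}=2(1+X)^{-1}$ and therefore $A^{1/2}(1+X)^{-1}A^{1/2}=\frac12 A!_{1/2}\left(AB^{-1}A\right)$, which matches the scalar term $\frac{1}{1+t}$. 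For the right-hand side, by the definition of $\natural$ we have $A^{1/2}X^{3/2}A^{1/2}=A\natural_{3/2}B$, $A^{1/2}X^{1/2}A^{1/2}=A\sharp_{1/2}B$ and $A^{1/2}X^{-1/2}A^{1/2}=A\natural_{-1/2}B$. Collecting these identifications yields precisely the asserted inequality.

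The verifications involved --- the polynomial division and the fact that congruence together with the functional calculus preserves operator inequalities --- are routine; the only slightly delicate point is the bookkeeping that turns the scalar term $\frac{1}{1+t}$ into $\frac12 A!_{1/2}\left(AB^{-1}A\right)$ after the congruence, so I expect recognizing that harmonic-mean structure to be the main (if minor) obstacle.
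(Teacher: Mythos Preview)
Your proposal is correct and is exactly the argument the paper has in mind: the corollary is stated without proof, merely as the ``noncommutative version'' of Corollary~\ref{cor_ref_amgm}, and your route---specializing the scalar inequality to $x=1$, $y=t$, rewriting the outer terms as representing functions, and then applying the functional calculus to $X=A^{-1/2}BA^{-1/2}$ followed by congruence with $A^{1/2}$---is the standard translation procedure the paper uses for its other operator corollaries. The algebraic identifications you give (including the one turning $\frac{1}{1+t}$ into $\frac{1}{2}A!_{1/2}(AB^{-1}A)$) are all correct.
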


It is interesting that Corollary \ref{cor_ref_amgm} refines the well known inequality \cite{Mit1970}:
\begin{equation}\label{scalar_intro}
\frac{1}{8}\frac{(x-y)^2}{y}\leq \frac{x+y}{2}-\sqrt{xy}\leq \frac{1}{8}\frac{(x-y)^2}{x},
\end{equation}
valid for $x\leq y.$

The next theorem refines Theorem \ref{prop3.1}

\begin{theorem}\label{prop3.2}
Let $f:J\subset \mathbb{R} \to [0,\infty)$ be a nonlinear convex function and let $a_i \in J$, $p_i\geq 0$ and $\sum\limits_{i=1}^n p_i =1$. Then we have for any natural number $m$,
\begin{equation}\label{prop3.2_eq02}
\frac{(A_f^m-G_f^m)(A_f-\sqrt{A_fG_f})}{A_f^m-(\sqrt{A_fG_f})^m} \leq A_f - G_f \leq \frac{(A_f^m-G_f^m)(A_f-\sqrt{A_fG_f})}{(A_f+G_f-\sqrt{A_fG_f})^m-G_f^m}
\end{equation}
where
$A_f$ and $G_f$ are defined as in Theorem \ref{prop3.1}.
\end{theorem}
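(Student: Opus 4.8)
The plan is to reduce the assertion to a purely scalar two-sided inequality, exactly in the spirit of the proof of Theorem \ref{prop3.1}. By Jensen's inequality $A_f \ge G_f \ge 0$, so it suffices to prove that for all real $a \ge b > 0$ and all $m \in \mathbb{N}$,
\[
\frac{(a^m - b^m)(a - \sqrt{ab})}{a^m - (\sqrt{ab})^m}\ \le\ a - b\ \le\ \frac{(a^m - b^m)(a - \sqrt{ab})}{(a + b - \sqrt{ab})^m - b^m},
\]
and then substitute $a = A_f$, $b = G_f$. The degenerate cases $a = b$ and $b = 0$ are disposed of directly (both reduce to an equality, read via continuity where a $0/0$ appears).

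Write $g := \sqrt{ab}$ and $c := a + b - \sqrt{ab}$. First I would record the ordering $b \le g \le c \le a$, where $b \le g$, $g \le a$ and $c \le a$ are each equivalent to $b \le a$, while $c - g = (\sqrt{a} - \sqrt{b})^2 \ge 0$ gives $g \le c$. Next I would introduce $S(x,y) := \sum_{k=0}^{m-1} x^{m-1-k} y^{k}$, so that $x^m - y^m = (x - y)\,S(x,y)$ for all $x,y$, and note that $S(x,y)$ is nondecreasing in each of its two nonnegative arguments. The key algebraic point is the identity $c - b = a - g$: it makes the factor $a - g$ occurring in both numerators cancel against $a^m - g^m = (a - g)\,S(a,g)$ in the left-hand denominator and against $c^m - b^m = (c - b)\,S(c,b) = (a - g)\,S(c,b)$ in the right-hand denominator. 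After this cancellation the left inequality becomes $(a - b)\,S(a,b) \le (a - b)\,S(a,g)$, true since $b \le g$; and the right inequality becomes $(a - b)\,S(c,b) \le (a - b)\,S(a,b)$, true since $c \le a$. Dividing out $a - b > 0$ finishes the scalar claim.

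The computation is routine once the auxiliary quantities $g$ and $c$ are in place; the step I would flag as the real content is recognizing the identity $c - b = a - g$ together with the chain $b \le g \le c \le a$, since this is precisely what forces the two apparently asymmetric bounds to factor through the same linear term $a - g$ and collapse to monotonicity of the increasing polynomial $S$. I would also take care to state the hypotheses so that the denominators $a^m - g^m$ and $c^m - b^m$ are positive (equivalently $a > b$), and I would append a remark that for $m = 1$ the inequality \eqref{prop3.2_eq02} degenerates to $A_f - G_f = A_f - G_f$, while for every $m$ the bounds are sharper than those of Theorem \ref{prop3.1} (since $S(a,g) \le S(a,a)$ and $S(c,b) \ge S(b,b)$), hence sharper than Corollary \ref{8} when $m = 2$, consistently with the claim that Theorem \ref{prop3.2} refines Theorem \ref{prop3.1}.
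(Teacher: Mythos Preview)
Your proof is correct and follows essentially the same route as the paper. The paper also reduces to the scalar inequality, introduces the same symmetric polynomial (written there as $g(a,b)=\sum_{k=0}^{m-1}a^{m-1-k}b^{k}$), records the chain $b\le\sqrt{ab}\le a+b-\sqrt{ab}\le a$, and uses monotonicity of this polynomial in each variable to obtain $g(a+b-\sqrt{ab},b)<g(a,b)<g(a,\sqrt{ab})$, which after multiplying through by $a-b$ is exactly your pair $S(c,b)\le S(a,b)\le S(a,g)$; the identity $c-b=a-g$ that you highlight is used implicitly when the paper writes $g(a+b-\sqrt{ab},b)=\dfrac{(a+b-\sqrt{ab})^m-b^m}{a-\sqrt{ab}}$.
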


\begin{proof}
Let $a > b > 0$. We put 
$$
g(a,b) := a^{m-1}+a^{m-2}b+ \cdots +ab^{m-2}+b^{m-1} = \frac{a^m-b^m}{a-b} \geq 0.
$$
Note that $\dfrac{\partial g(a,b)}{\partial a} > 0$ for a fixed $b >0$, $\dfrac{\partial g(a,b)}{\partial b} > 0$ for a fixed $a >0$, and we have the inequalities 
$b \leq \sqrt{ab} \leq \frac{a+b}{2} \leq a+b-\sqrt{ab} \leq a$.
We consider the case $a > b$. Then we get 
$$
g(a+b-\sqrt{ab},b)< g(a,b) < g(a,\sqrt{ab}).
$$
That is,
$$
\frac{(a+b-\sqrt{ab})^m-b^m}{a-\sqrt{ab}} < g(a,b) < \frac{a^m-(\sqrt{ab})^m}{a-\sqrt{ab}}.
$$
Thus we have 
$$
\frac{g(a,b)(a-\sqrt{ab})}{a^m-(\sqrt{ab})^m} < \frac{g(a,b)}{g(a,b)} < \frac{g(a,b)(a-\sqrt{ab})}{(a+b-\sqrt{ab})^m-b^m}.
$$
Therefore we have 
\begin{equation}\label{prop3.2_eq03}
\frac{(a^m-b^m)(a-\sqrt{ab})}{a^m-(\sqrt{ab})^m} \leq a-b \leq \frac{(a^m-b^m)(a-\sqrt{ab})}{(a+b-\sqrt{ab})^m-b^m}.
\end{equation}
From Jensen inequality, we have $A_f > G_f$. Therefore we get (\ref{prop3.2_eq02}) by putting $a:= A_f$ and 
$b := G_f$ in (\ref{prop3.2_eq03}).
\end{proof}

We also show an alternative double inequality whose upper bound gives a refinement of that in Theorem \ref{prop3.1}.

\begin{theorem}\label{prop3.3}
Let $f:J\subset \mathbb{R} \to [0,\infty)$ be a nonlinear convex function and let $a_i \in J$, $p_i\geq 0$ and $\sum\limits_{i=1}^n p_i =1$. Then we have for any natural number $m$,
\begin{equation}\label{prop3.3_eq01}
\frac{m(A_fG_f)^{\frac{m-1}{2}}(A_f-G_f)^2}{A_f^m-G_f^m} \leq A_f - G_f \leq \frac{A_f^m-G_f^m}{m(A_fG_f)^{\frac{m-1}{2}}}
\end{equation}
where
$A_f$ and $G_f$ are defined as in Theorem \ref{prop3.1}.
\end{theorem}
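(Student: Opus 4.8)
The plan is to imitate the reduction already used for Theorems \ref{prop3.1} and \ref{prop3.2}: prove a purely scalar double inequality for real numbers and then specialise. Concretely, Jensen's inequality gives $A_f\geq G_f$ (and $G_f>0$ under the standing positivity convention, exactly as is implicitly needed for the denominators in Theorem \ref{prop3.1}), so it suffices to establish that for all $a\geq b>0$ and $m\in\mathbb{N}$,
\[
\frac{m(ab)^{\frac{m-1}{2}}(a-b)^2}{a^m-b^m}\;\leq\; a-b\;\leq\;\frac{a^m-b^m}{m(ab)^{\frac{m-1}{2}}},
\]
and then to substitute $a:=A_f$ and $b:=G_f$.

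The first thing I would point out is that both displayed inequalities collapse to a single one. The case $a=b$ is trivial, so assume $a>b$; then $a-b>0$ and $a^m-b^m>0$, and clearing denominators shows that the right-hand inequality is equivalent to $m(ab)^{(m-1)/2}(a-b)\leq a^m-b^m$, while the left-hand inequality, after cancelling one factor $a-b$, is equivalent to the very same statement $m(ab)^{(m-1)/2}(a-b)\leq a^m-b^m$. Hence the whole theorem rests on this one estimate.

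To prove it I would use the factorisation $a^m-b^m=(a-b)\sum_{k=0}^{m-1}a^{m-1-k}b^{k}$ already exploited in Theorem \ref{prop3.1}, reducing matters to
\[
m(ab)^{\frac{m-1}{2}}\;\leq\;\sum_{k=0}^{m-1}a^{m-1-k}b^{k}.
\]
This is precisely the AM--GM inequality for the $m$ nonnegative numbers $a^{m-1-k}b^{k}$, $k=0,\dots,m-1$: their geometric mean equals $\bigl(\prod_{k=0}^{m-1}a^{m-1-k}b^{k}\bigr)^{1/m}=\bigl(a^{m(m-1)/2}b^{m(m-1)/2}\bigr)^{1/m}=(ab)^{(m-1)/2}$, using $\sum_{k=0}^{m-1}k=\sum_{k=0}^{m-1}(m-1-k)=m(m-1)/2$. (Equivalently one may pair the $j$-th and $(m-1-j)$-th terms and invoke $x+y\geq 2\sqrt{xy}$, which also handles the central term when $m$ is odd.) Multiplying back by $a-b\geq 0$ and reinserting the factorisation finishes the scalar inequality, and the substitution $a=A_f$, $b=G_f$ then yields \eqref{prop3.3_eq01}.

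There is no genuine obstacle here; the only items deserving a word of care are the standing hypothesis $G_f>0$ so the denominators are meaningful (the same convention being already in force in Theorem \ref{prop3.1}) and the bookkeeping of the exponent sums in the AM--GM step. It is worth remarking, as claimed before the statement, that the resulting upper bound refines that of Theorem \ref{prop3.1}, since $a\geq b$ forces $(ab)^{(m-1)/2}\geq b^{\,m-1}$ and hence $\dfrac{a^m-b^m}{m(ab)^{(m-1)/2}}\leq\dfrac{a^m-b^m}{m\,b^{\,m-1}}$.
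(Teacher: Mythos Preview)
Your proof is correct and follows essentially the same route as the paper: reduce to the scalar inequality for $a\ge b>0$, use the factorisation $a^m-b^m=(a-b)\sum_{k=0}^{m-1}a^{m-1-k}b^k$, and invoke $\sum_{k=0}^{m-1}a^{m-1-k}b^k\ge m(ab)^{(m-1)/2}$ before substituting $a=A_f$, $b=G_f$. The paper simply asserts this last inequality (writing $g(a,b)>m(ab)^{(m-1)/2}$), whereas you supply the AM--GM justification explicitly and also note that the two desired inequalities are equivalent to one another; both are welcome clarifications but not a different method.
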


\begin{proof}
Let $a > b > 0$. We put 
$$
g(a,b) := a^{m-1}+a^{m-2}b+ \cdots +ab^{m-2}+b^{m-1} = \frac{a^m-b^m}{a-b} \geq 0.
$$
We consider the case $a > b$. 
Since $g(a,b) > m(ab)^{\frac{m-1}{2}}$, we get 
$$
\frac{m(ab)^{\frac{m-1}{2}}}{g(a,b)} < \frac{g(a,b)}{g(a,b)} < \frac{g(a,b)}{m(ab)^{\frac{m-1}{2}}}.
$$
That is, 
$$
\frac{m(ab)^{\frac{m-1}{2}}(a-b)}{a^m-b^m} < \frac{g(a,b)}{g(a,b)} < \frac{a^m-b^m}{m(ab)^{\frac{m-1}{2}}(a-b)}.
$$
Therefore we have 
\begin{equation}\label{prop3.3_eq02}
\frac{m(ab)^{\frac{m-1}{2}}(a-b)^2}{a^m-b^m} \leq a-b \leq \frac{a^m-b^m}{m(ab)^{\frac{m-1}{2}}}.
\end{equation}
From Jensen inequality, we have $A_f > G_f$. Therefore we get (\ref{prop3.3_eq01}) by putting $a:= A_f$ and 
$b := G_f$ in (\ref{prop3.3_eq02}).
\end{proof}

\section*{Acknowledgements}
The authors would like to thank the referees for their careful and insightful comments to improve our manuscript.
This work was partially supported by JPSP KAKENHI grant numbers 16K05257 and 19K03525.


{\tiny (S. Furuichi) 
Department of Information Science, College of Humanities and Sciences, Nihon University, 3-25-40, Sakurajyousui, Setagaya-ku,
Tokyo, 156-8550, Japan}

{\tiny \textit{E-mail address:} furuichi@chs.nihon-u.ac.jp}

{\tiny (K. Yanagi) Department of Mathematics, Josai University, 1-1, Keyakidai, Sakado City, Saitama, 350-0295, Japan}

{\tiny \textit{E-mail address:} yanagi@josai.ac.jp}

{\tiny (H. R. Moradi) Department of Mathematics, Payame Noor University (PNU), P.O. Box 19395-4697, Tehran, Iran}

{\tiny \textit{E-mail address:} hrmoradi@mshdiau.ac.ir }

\end{document}